\newtheorem{theo}{Theorem}[section]
\newtheorem{open problem}{Open problem}[section]
\newcommand{\pa}{\partial}
\newcommand{\be}{\begin{equation}}
\newcommand{\ee}{\end{equation}}
\newcommand{\bs}{\begin{small}}
\newcommand{\es}{\end{small}}
\newcommand{\beas}{\begin{eqnarray*}}
\newcommand{\eeas}{\end{eqnarray*}}
\newcommand{\bea}{\begin{eqnarray}}
\newcommand{\eea}{\end{eqnarray}}
\renewcommand{\epsilon}{\varepsilon}
\numberwithin{equation}{section}
\begin{document}

\title[Estimation of the pre-Schwarzian norm]{An estimation of the pre-Schwarzian norm for certain classes of analytic functions}
\author[V. Allu, R. Biswas and R. Mandal]{Vasudevarao Allu, Raju Biswas and Rajib Mandal}
\date{}
\address{Vasudevarao Allu, Department of Mathematics, School of Basic Science, Indian Institute of Technology Bhubaneswar, Bhubaneswar-752050, Odisha, India.}
\email{avrao@iitbbs.ac.in}
\address{Raju Biswas, Department of Mathematics, Raiganj University, Raiganj, West Bengal-733134, India.}
\email{rajubiswasjanu02@gmail.com}
\address{Rajib Mandal, Department of Mathematics, Raiganj University, Raiganj, West Bengal-733134, India.}
\email{rajibmathresearch@gmail.com}
\maketitle
\let\thefootnote\relax
\footnotetext{2020 Mathematics Subject Classification: 30C45, 30C55.}
\footnotetext{Key words and phrases: analytic functions, convex functions, starlike functions, Ma-Minda class of starlike functions, pre-Schwarzian norm.}
\begin{abstract}
The primary objective of this paper is to establish the sharp estimates of the pre-Schwarzian norm for functions $f$ in the class 
$\mathcal{S}^*(\varphi)$ and $\mathcal{C}(\varphi)$ when $\varphi(z)=1/(1-z)^s$ with $0<s\leq 1$ and $\varphi(z)=(1+sz)^2$ with $0<s\leq 1/\sqrt{2}$, where $\mathcal{S}^*(\varphi)$ and $\mathcal{C}(\varphi)$ are the Ma-Minda type starlike and Ma-Minda type convex classes associated with $\varphi$, respectively.
\end{abstract}
\section{Introduction}
\noindent Let $\mathcal{H}$ denote the class of all analytic functions in the unit disk $\mathbb{D}:=\{z\in\mathbb{C}:|z|<1\}$ and let $\mathcal{A}$ denote the class of functions $f\in\mathcal{H}$ of the form
\bea\label{R-01} f(z)= z+\sum_{n=2}^{\infty}a_n z^n.\eea
Further, let $\mathcal{S}$ be the subclass  of $\mathcal{A}$ that are univalent ({\it i.e.}, one-to-one) in $\mathbb{D}$. 
A domain $\Omega$ is called starlike with respect to a point $z_0\in\Omega$ if the line segment joining $z_0$ to any point in $\Omega$ lies in $\Omega$, {\it i.e.}, 
$(1-t)z_0+t z\in\Omega$ for all $t\in[0,1]$ and for all $z\in\Omega$. In particular, if $z_0=0$, then $\Omega$ is simply called starlike. A function 
$f\in\mathcal{A}$ is said to be starlike if $f(\mathbb{D})$ is starlike with respect to the origin. 
Let $\mathcal{S}^*$ denote the class of starlike functions in $\mathbb{D}$. It is well-known that a function $f\in\mathcal{A}$ is in $\mathcal{S}^*$ if, and only if, $\text{Re} (zf'(z)/f(z))>0$ for $z\in\mathbb{D}$. A domain $\Omega$ is called convex if it is starlike with respect to any point 
in $\Omega$. In other words, convexity implies starlikeness, but the converse is not necessarily true. A domain can be starlike 
without being convex. A function $f\in\mathcal{A}$ is said to be convex if $f(\mathbb{D})$ is convex. Let  $\mathcal{C}$ denote the class of convex functions in $\mathbb{D}$.
It is well-known that a function $f\in\mathcal{A}$ is in $\mathcal{C}$ if, and only if, $\text{Re}\left(1+zf''(z)/f'(z)\right)>0$ for $z\in\mathbb{D}$. Moreover, a function 
$f\in\mathcal{A}$ is said to be $\alpha$-spirallike function if $\textrm{Re}(e^{-i\alpha}zf'(z)/f(z))>0$ for $z\in\mathbb{D}$, where $-\pi/2<\alpha<\pi/2$.
For more details about the aforementioned classes, we refer to \cite{D1983, G1983, TTA2018}.\\[2mm]
\indent Let $\mathcal{B}$ be the class of all analytic functions $\omega:\mathbb{D}\rightarrow\mathbb{D}$ and $\mathcal{B}_0=\{\omega\in\mathcal{B} : \omega(0)=0\}$. 
Functions in $\mathcal{B}_0$ are called Schwarz function. According to Schwarz's lemma, if $\omega\in\mathcal{B}_0$, then $|\omega(z)|\leq |z|$ and $|\omega'(0)|\leq 1$. 
Strict inequality holds in both estimates unless $\omega(z)=e^{i\theta}z$, $\theta\in\mathbb{R}$. A sharpened form of the Schwarz lemma, known as the Schwarz-Pick lemma, 
gives the estimate  $|\omega'(z)|\leq (1-|\omega(z)|^2)/(1-|z|^2)$ for $z\in\mathbb{D}$ and $\omega\in\mathcal{B}$.\\[2mm]
\indent An analytic function $f$ in $\mathbb{D}$ is said to be subordinate to an analytic function $g$ in $\mathbb{D}$, written as $f\prec g$, if there exists a function $\omega\in\mathcal{B}_0$ such that $f(z)=g(\omega(z))$ for $z\in\mathbb{D}$. Moreover, if $g$ is univalent in $\mathbb{D}$,
then $f \prec g$ if, and only if, $f(0)=g(0)$ and $f(\mathbb{D})\subseteq g(\mathbb{D})$. For basic details and results on subordination classes, we refer
to \cite[Chapter 6]{D1983}. 
Using the notion of subordination, Ma and Minda \cite{MM1992} have introduced more general subclasses of starlike and convex functions as follows:
\beas\mathcal{S}^*(\varphi)=\left\{f\in\mathcal{S}:\frac{zf'(z)}{f(z)}\prec\varphi(z)\right\}\quad\text{and}\quad\mathcal{C}(\varphi)=\left\{f\in\mathcal{S}:1+\frac{zf''(z)}{f'(z)}\prec\varphi(z)\right\},\eeas
where the function $\varphi :\mathbb{D}\to\mathbb{ C}$, called Ma-Minda function, is analytic and univalent in $\mathbb{D}$ such that $\varphi(\mathbb{D})$ has positive real 
part, symmetric with respect to the real axis, starlike with respect to $\varphi(0)=1$ and $\varphi'(0)>0$. A Ma-Minda function has the Taylor series expansion 
of the form $\varphi(z)=1+\sum_{n=1}^\infty a_nz^n $ $(a_1>0)$.
We call $\mathcal{S}^*(\varphi)$ and $\mathcal{C}(\varphi)$ the Ma-Minda type starlike and Ma-Minda type convex classes associated with $\varphi$, respectively. One 
can easily prove the inclusion 
relations $\mathcal{S}^*(\varphi)\subset\mathcal{S}^*$ and $\mathcal{C}(\varphi)\subset\mathcal{C}$. It is known that $f\in\mathcal{C}(\varphi)$ if, and only if, $zf'\in\mathcal{S}^*(\varphi)$.\\[2mm]
\indent For different choices of the function $\varphi$, the classes $\mathcal{S}^*(\varphi)$ and $\mathcal{C}(\varphi)$ generate several important subclasses of $\mathcal{S}^*$
 and $\mathcal{C}$, respectively. 
For example, if $\varphi(z)=(1+z)/(1-z)$, then $\mathcal{S}^*(\varphi)=\mathcal{S}^*$ and $\mathcal{C}(\varphi)=\mathcal{C}$. For $\varphi(z)=(1+(1-2\alpha)z)/(1-z),~0\leq \alpha<1$, we get the classes $\mathcal{S}^*(\alpha)$ of starlike functions of 
order $\alpha$ and $\mathcal{C}(\alpha)$ of convex functions of order $\alpha$. If $\varphi=((1+z)/(1-z))^\alpha$ for $0< \alpha\leq 1$, then 
$\mathcal{S}^*(\varphi)=\mathcal{S}\mathcal{S}^*(\alpha)$ the class of strongly starlike functions of order $\alpha$ and 
$\mathcal{C}(\varphi)=\mathcal{S}\mathcal{C}(\alpha)$ the class of strongly convex functions of order $\alpha$ (see \cite{S1966}). Also for  $\varphi=(1+Az)/(1+Bz),~-1\leq B<A\leq 1$, we have the classes 
of Janowski starlike functions $\mathcal{S}^*[A,B]$ and Janowski convex functions $\mathcal{C}[A,B]$ (see \cite{J1973}).
For $\varphi(z)=(1+2/\pi^2(\log(1-\sqrt{z})/(1+\sqrt{z}))^2)$ the class $\mathcal{C}(\varphi)$ (resp., $\mathcal{S}^*(\varphi)$) is the class {\it UCV} (resp. {\it UST} ) of 
normalized uniformly convex (resp. starlike) functions (see \cite{R1993,1G1991,2G1991, R1991}). Ma and Minda \cite{1MM1992, MM1993} have studied the class {\it UCV} 
extensively. Cho {\it et al.} \cite{CKKR2019} introduced the family $\mathcal{S}^*(1+\sin z)$ and studied the radius of starlikeness and convexity. Kargar {\it et al.} \cite{KES2019}
have introduced the class $\mathcal{BS}^*(\alpha):=\mathcal{S}^*(1+z/(1-\alpha z^2))$, which is associated with the Booth lemniscate.
The class $\mathcal{S}^*(2/(1+e^{-z})$ was introduced by Goel and Kumar \cite{GK2020} and studied several inclusion relations, radius problems as well as coefficient estimates.\\[2mm]
\indent In this paper, we consider two different classes of functions: $\mathcal{S}^*_{hyp}=\mathcal{S}^*(\varphi_s)$ with $\varphi_s(z)=1/(1-z)^s$ $(0<s\leq 1)$ and $\mathcal{S}_{L}^*=\mathcal{S}^*(\varphi)$ with $\varphi(z)=(1+s z)^2$ ($0<s\leq 1/\sqrt{2}$), where the branch of the logarithm is determined by $\varphi_s(0)=1$. More precisely,
\beas \label{R-05}
&&\mathcal{S}^*_{hyp}=\left\{f\in\mathcal{A} :\frac{zf'(z)}{f(z)}\prec \frac{1}{(1-z)^s},~ 0<s\leq 1\right\}\\\text{and}
&&\label{R-10}\mathcal{S}^*_{L}=\left\{f\in\mathcal{A}:\frac{zf'(z)}{f(z)}\prec (1+s z)^2, 0<s\leq 1/\sqrt{2}\right\}.
\eeas
The function 
\beas \frac{1}{(1-z)^s}=\mathrm{exp}(-s\log(1-z))=1+\sum_{n=1}^\infty \frac{s(s+1)\cdots(s+n-1)}{n!}z^n\quad (z\in\mathbb{D}),\eeas
 where the branch of the logarithm is determined by $\log(1)=0$.
It is evident that the function $\varphi(z)=1/(1-z)^s$ maps the unit disk $\mathbb{D}$ onto a domain bounded by the right branch of the hyperbola
\beas H(s):=\left\{r e^{i\theta}: r=\frac{1}{(2\cos(\theta/s))^s},~ |\theta|<\frac{\pi s}{2}\right\},\eeas
as illustrated in Figure \ref{Fig1}.
Moreover, $\varphi(\mathbb{D})$ is symmetric respecting the real axis, $\varphi$ is convex and hence starlike with
respect to $\varphi(0)=1$. It is evident that $\varphi'(0)>0$ and $\varphi$ has positive real part in $\mathbb{D}$. Thus, $\varphi$ satisfies the category of Ma-Minda functions.
A function $f\in \mathcal{S}^*_{hyp}$ if, and only if, there exists an analytic function $p$ with $p(0)=1$ and $p(z)\prec 1/(1-z)^s$ in $\mathbb{D}$ such that
\bea\label{e1} f(z)=z\;\mathrm{exp}\left(\int_0^z \frac{p(t)-1}{t}dt\right).\eea
If we choose, $p(t)=1/(1-t)^s$, then from (\ref{e1}), we obtain the function 
\beas f_{s,1}(z):=z\;\mathrm{exp}\left(\int_0^z \frac{(1-t)^{-s}-1}{t}dt\right)=z+s z^2+\frac{3s^2+s}{4}z^3+\frac{17s^3+15s^2+4s}{36}z^4+\cdots.\eeas
\begin{figure}[H]
\begin{minipage}[c]{0.5\linewidth}
\centering
\includegraphics[scale=0.7]{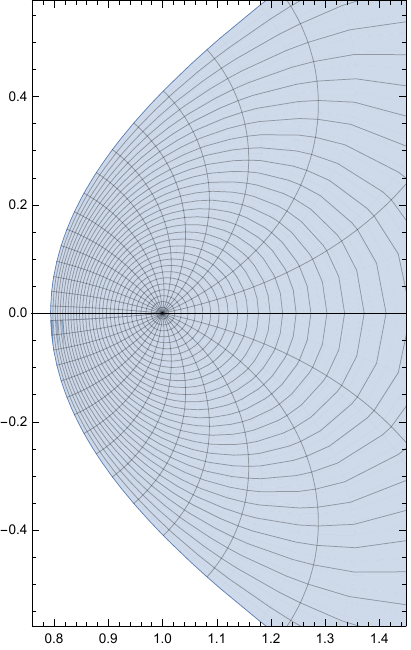}
\caption{Image of $\mathbb{D}$ under the mapping $1/(1-z)^s$ for $s=1/3$}
\label{Fig1}
\end{minipage}
\begin{minipage}[c]{0.49\linewidth}
\centering
\includegraphics[scale=0.7]{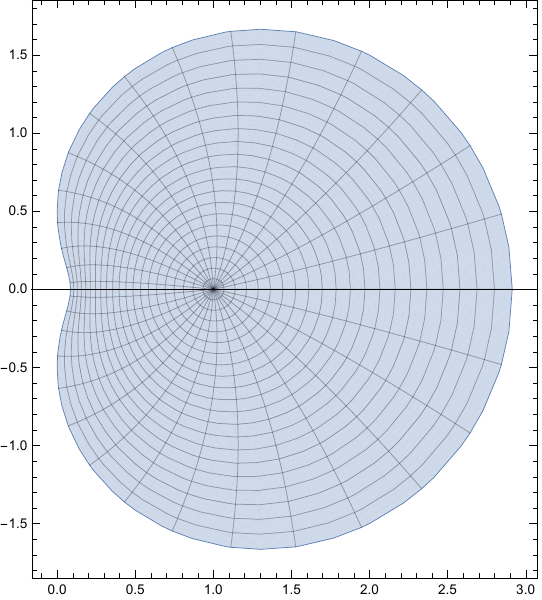}
\caption{Image of $\mathbb{D}$ under the mapping $(1+sz)^2$ for $s=1/\sqrt{2}$}
\label{Fig2}
\end{minipage}
\end{figure}
\noindent The function $\varphi(z)=(1+sz)^2$ maps the unit disk $\mathbb{D}$ onto a domain bounded by a lima\c{c}on given by
\beas \left\{u+iv\in\mathbb{C}: \left((u-1)^2+v^2-s^4\right)^2=4s^2\left(\left(u-1+s^2\right)^2+v^2\right)\right\},\eeas
which is symmetric about the real axis, as illustrated in Figure \ref{Fig2}. Note that for $0<s\leq 1/\sqrt{2}$, $\varphi(z)$ satisfies the category of Ma-Minda functions.
A function $f\in \mathcal{S}^*_{L}$ if, and only if, there exists 
an analytic function $p$ with $p(0)=1$ and $p\prec (1+s z)^2$ in $\mathbb{D}$ such that
\bea\label{e3} f(z)=z\;\mathrm{exp}\left(\int_0^z \frac{p(t)-1}{t}dt\right).\eea 
If we choose $p(z)=(1+sz)^2$ in \eqref{e3}, we obtain
\beas f_{s,2}(z)=z\;\mathrm{exp}\left(\int_0^z\frac{(1+st)^2-1}{t}dt\right)=z+2sz^2+\frac{5}{2}s^2z^3+\cdots.\eeas
The functions $f_{s,1}(z)$ and $f_{s,2}(z)$ plays the role of extremal function for many extremal problems in the classes $\mathcal{S}^*_{hyp}$ and $\mathcal{S}^*_{L}$, respectively. \\[2mm]
\indent
It is important to note that $f\in\mathcal{C}_{hyp}$ (resp. $\mathcal{C}_{L}$) if, and only if, $zf'\in\mathcal{S}^*_{hyp}$ (resp. $\mathcal{S}^*_{L}$), where  the classes $\mathcal{C}_{hyp}$ and $\mathcal{C}_{L}$ are defined by
\beas&&\mathcal{C}_{hyp}=\left\{f\in\mathcal{A}:1+\frac{zf''(z)}{f'(z)}\prec \frac{1}{(1-z)^s}, ~0<s\leq 1\right\}\\\text{and}
&&\mathcal{C}_{L}=\left\{f\in\mathcal{A}:1+\frac{zf''(z)}{f'(z)}\prec (1+sz)^2,~0<s\leq 1/\sqrt{2}\right\}.\eeas
It is evident that a function $f\in \mathcal{C}_{hyp}$ if, and only if, there exists an analytic function $p$ with $p(0)=1$ and $p(z)\prec 1/(1-z)^s$ in $\mathbb{D}$ such that
\bea\label{e5} f(z)=\int_0^z\left(\;\mathrm{exp}\left(\int_0^u \frac{p(t)-1}{t}dt\right)\right)du.\eea 
If we choose $p(z)=1/(1-z)^s$ in \eqref{e5}, we obtain
\beas f_{s,3}(z)=\int_0^z\mathrm{exp}\left(\int_0^u \frac{(1-t)^{-s}-1}{t}dt\right)du\in \mathcal{C}_{hyp}.\eeas
Note that $zf_{s,3}'(z)=f_{s,1}(z)$.
 A function $f\in \mathcal{C}_{L}$ if, and only if, there exists 
an analytic function $p$ with $p(0)=1$ and $p\prec(1+sz)^2$ in $\mathbb{D}$ such that
\bea\label{e6} f(z)=\int_0^z\left(\;\mathrm{exp}\left(\int_0^u \frac{p(t)-1}{t}dt\right)\right)du.\eea 
If we choose $p(z)=(1+sz)^2$ in \eqref{e6}, we obtain
\beas f_{s,4}(z)=\int_0^z\mathrm{exp}\left(\int_0^u \frac{(1+s t)^2-1}{t}dt\right)du\in \mathcal{C}_{L}.\eeas
It is evident that $zf_{s,4}'(z)=f_{s,2}(z)$. For a more in-depth results of these classes, we refer to \cite{KME2019,KME2020,EBCA2020,MK2020,BR2023,CSW2022}.
 \section{Pre-Schwarzian Norm}
An analytic function $f(z)$ in a domain $\Omega$ is said to be locally univalent if for each $z_0\in\Omega$, there exists a neighborhood $U$ of $z_0$ such that $f(z)$ is univalent in $U$. It is well-known that the non-vanishing of the Jacobian is necessary and sufficient conditions for local univalence (see \cite[Chapter 1]{D1983}).
Let $\mathcal{LU}$ denote the subclass of $\mathcal{H}$ consisting of all locally univalent functions in $\mathbb{D}$, {\it i.e.}, $\mathcal{LU}:=\{f\in\mathcal{H}:f'(z)\not= 0~~\text{ for all }~~z\in\mathbb{D}\}$. For $f\in\mathcal{LU}$, the pre-Schwarzian derivative is defined by
\beas P_f(z):=\frac{f''(z)}{f'(z)},\eeas
and the pre-Schwarzian norm (the hyperbolic sup-norm) is defined by
\beas \Vert P_f\Vert :=\sup_{z\in\mathbb{D}}\;(1-|z|^2)\left|P_f(z)\right|.\eeas
This norm plays an important rule in the theory of Teichm\"{u}ller spaces. For a univalent function $f$ in $\mathbb{D}$, it is well-known that $\Vert P_f \Vert\leq 6$ and the equality 
is attained for the Koebe function or its rotation. One of the most used univalence criterion for locally univalent analytic functions is the Becker's univalence criterion \cite{B1972}, 
which states that if $f\in\mathcal{LU}$ and $\sup_{z\in\mathbb{D}}\left(1-|z|^2\right) \left|zP_f(z)\right|\leq1$, then $f$ is univalent in $\mathbb{D}$. In a subsequent study, 
Becker and Pommerenke \cite{BP1984} prove that the constant $1$ is sharp. In 1976, Yamashita \cite{Y1976} 
proved that $\Vert P_f \Vert<\infty $ is finite if, and only if, $f$ is uniformly 
locally univalent in $\mathbb{D}$. Moreover, if $\Vert P_f\Vert<2$, then $f$ is bounded in $\mathbb{D}$ (see \cite{KS2002}).\\[2mm]
\indent In the field of univalent function theory, several researchers have studied the pre-Schwarzian norm for various subclasses of analytic and univalent functions.
In 1998, Sugawa \cite{S1998} established the sharp estimate of the pre-Schwarzian norm for functions in the class of strongly starlike functions of order $\alpha$ ($0<\alpha\leq 1$).
In $1999$, Yamashita \cite{Y1999} proved that $\Vert P_f\Vert \leq 6-4\alpha$ for $f\in\mathcal{S}^*(\alpha)$ and $\Vert P_f\Vert \leq 4(1-\alpha)$ for $f\in\mathcal{C}(\alpha)$, 
where $0\leq \alpha<1$ and both the estimates are sharp. In $2000$, Okuyama \cite{O2000} established the sharp estimate of the pre-Schwarzian norm for $\alpha$-spirallike functions. Kim and Sugawa \cite{KS2006} established the sharp 
estimate of the pre-Schwarzian norm $\Vert P_f\Vert \leq 2(A-B)/(1+\sqrt{1-B^2})$ for $f\in\mathcal{C}[A,B]$ (see also \cite{PS2008}). Ponnusamy and Sahoo \cite{PS2010} 
obtained the sharp estimates of the pre-Schwarzian norm for functions in the class
$\mathcal{S}^*[\alpha,\beta]:=S^*\left(\left((1+(1-2\beta)z)/(1-z)\right)^\alpha\right)$, where $0<\alpha\leq 1$ and $0\leq \beta<1$. In $2014$, Aghalary and Orouji \cite{AO2014} obtained the sharp estimate of the pre-Schwarzian norm for $\alpha$-spirallike function of order $\rho$, where $\alpha\in(-\pi/2,\pi/2)$ and $\rho\in[0,1)$. The pre-Schwarzian norm of certain integral transform of $f$ for certain subclass of $f$ has been also studied in the literature. For a detailed study on pre-Schwarzian norm, we refer to \cite{KPS2004,PPS2008,PS2008, AP2023,1AP2023,2AP2023,AP2024,CKPS2005} and the references therein.\\[2mm]
\indent In this paper, we establish sharp estimates of the pre-Schwarzian norms for functions in the classes  $\mathcal{S}^*_{hyp}$, $\mathcal{S}^*_{L}$, $\mathcal{C}_{hyp}$ and $\mathcal{C}_{L}$.
\section{Main results}
In the following result, we establish the sharp estimate of the pre-Schwarzian norm for functions $f$ in the class $\mathcal{S}^*_{hyp}$.
\begin{theo}\label{Th1} Let $f\in\mathcal{S}^*_{hyp}$. Then the pre-Schwarzian norm satisfies the following sharp inequality
\beas\Vert P_f\Vert \leq \left\{\begin{array}{lll}
\dfrac{s t_s(1+t_s)+(1+t_s)(1-t_s)^{1-s}-(1-t_s^2)}{t_s}&\text{for}~s\in(0,1)\\[2mm]
4&\text{for}~s=1,\end{array}\right.\eeas
where $t_s\in(0,1)$ is the unique root of the equation
\beas \frac{(1-t)^{-s} \left(s t^2 (1-t)^s+t^2 (1-t)^s+s t^2+(1-t)^s+s t-t^2-1\right)}{t^2}=0.\eeas
\end{theo}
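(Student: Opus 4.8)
The plan is to pass from $f$ to its Schwarz function $\omega$ and reduce the computation of $\Vert P_f\Vert$ to a one--variable extremal problem on $(0,1)$.

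\emph{Step 1 (a formula for $P_f$).} If $f\in\mathcal{S}^*_{hyp}$, then by the definition of subordination there is $\omega\in\mathcal{B}_0$ with $zf'(z)/f(z)=1/(1-\omega(z))^{s}$. Writing $\log f'(z)=\log\!\bigl(f(z)/z\bigr)+\log\!\bigl(zf'(z)/f(z)\bigr)$ and differentiating, I would obtain
\[
P_f(z)=\frac{f''(z)}{f'(z)}=\frac{1}{z}\left(\frac{1}{(1-\omega(z))^{s}}-1\right)+\frac{s\,\omega'(z)}{1-\omega(z)}.
\]
For $f=f_{s,1}$ one has $\omega(z)=z$, which will provide the extremal function.

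\emph{Step 2 (reduction to a radial problem).} Fix $z\in\mathbb{D}$ and put $r=|z|$, $\rho=|\omega(z)|$, so $0\le\rho\le r<1$ by the Schwarz lemma. Since
\[
\frac{1}{(1-w)^{s}}-1=\sum_{n=1}^{\infty}\frac{s(s+1)\cdots(s+n-1)}{n!}\,w^{n}
\]
has nonnegative coefficients for $0<s\le1$, we get $\bigl|(1-\omega(z))^{-s}-1\bigr|\le(1-\rho)^{-s}-1$; and the Schwarz--Pick inequality $|\omega'(z)|\le(1-\rho^{2})/(1-r^{2})$ together with $|1-\omega(z)|\ge1-\rho$ gives $(1-r^{2})\,s|\omega'(z)|/|1-\omega(z)|\le s(1+\rho)$. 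Since $(1-r^{2})/r$ is decreasing, replacing $r$ by $\rho\le r$ only enlarges the first term, so
\[
(1-r^{2})\,|P_f(z)|\le\frac{1-\rho^{2}}{\rho}\left(\frac{1}{(1-\rho)^{s}}-1\right)+s(1+\rho)=:\Phi_s(\rho),
\]
where, after simplification,
\[
\Phi_s(t)=\frac{1-t^{2}}{t}\left(\frac{1}{(1-t)^{s}}-1\right)+s(1+t)=\frac{s\,t(1+t)+(1+t)(1-t)^{1-s}-(1-t^{2})}{t}.
\]
Hence $\Vert P_f\Vert\le\sup_{t\in(0,1)}\Phi_s(t)$, and it remains to determine this supremum.

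\emph{Step 3 (the extremal value).} A direct computation shows that $\Phi_s'(t)$ coincides with the left--hand side of the equation defining $t_s$ in the statement; equivalently $\Phi_s'(t)=N_s(t)/\bigl(t^{2}(1-t)^{s}\bigr)$ with
\[
N_s(t)=(1-t)^{s}\bigl(1+(1+s)t^{2}\bigr)+s\,t(1+t)-(1+t^{2})=\bigl(1+(1+s)t^{2}\bigr)\Bigl((1-t)^{s}-\tfrac{1-st+(1-s)t^{2}}{1+(1+s)t^{2}}\Bigr).
\]
For $s\in(0,1)$ one checks $\Phi_s(0^{+})=\Phi_s(1^{-})=2s$, and $N_s(0)=N_s'(0)=0$, $N_s''(0)=s(s+3)>0$, $N_s(1^{-})=2(s-1)<0$; the remaining (and, I expect, most delicate) point is to prove that $N_s$ changes sign exactly once on $(0,1)$, i.e.\ that the strictly concave curve $t\mapsto(1-t)^{s}$ meets the rational curve $t\mapsto(1-st+(1-s)t^{2})/(1+(1+s)t^{2})$ at a single interior point $t_s$, with $N_s>0$ on $(0,t_s)$ and $N_s<0$ on $(t_s,1)$. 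Granting this, $\Phi_s$ increases on $(0,t_s)$ and decreases on $(t_s,1)$, so $\sup_{(0,1)}\Phi_s=\Phi_s(t_s)$, which is the asserted bound. For $s=1$ the class $\mathcal{S}^*_{hyp}$ is $\mathcal{S}^*(1/2)$, so Yamashita's estimate gives $\Vert P_f\Vert\le6-4\cdot\tfrac12=4$; this also follows from $\Phi_1(t)=2(1+t)$, whose supremum on $(0,1)$ is $4$.

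\emph{Step 4 (sharpness).} For $s\in(0,1)$ take $f=f_{s,1}$; then $\omega(z)=z$, so $\rho=r$ and all inequalities of Step~2 become equalities at the real point $z=t_s$, giving $(1-t_s^{2})\,|P_{f_{s,1}}(t_s)|=\Phi_s(t_s)$. For $s=1$, $P_{f_{1,1}}(z)=2/(1-z)$, and letting $z=r\to1^{-}$ shows that $4$ is best possible. Thus the whole argument is routine once the identity of Step~1 is in hand, the single exception being the single--crossing property of $N_s$ in Step~3, which requires a genuine monotonicity/convexity analysis.
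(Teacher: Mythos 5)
Your Steps 1, 2 and 4 follow the paper's argument essentially verbatim: the same decomposition $P_f=\frac{s\omega'}{1-\omega}+\frac1z\bigl((1-\omega)^{-s}-1\bigr)$, the same Schwarz--Pick and coefficient estimates, the same monotone-in-$r$ reduction to the one-variable function $\Phi_s$ (the paper's $F_2$), and the same extremal function $f_{s,1}$ with $\omega(z)=z$ evaluated on the positive real axis. Your computation of $\Phi_s'$ and its numerator $N_s$ agrees with the paper's $F_2'$, as do the boundary values $\Phi_s'(0^+)=s(s+3)/2>0$ and $\Phi_s'(1^-)=-\infty$.

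The genuine gap is exactly the point you flag and then assume: that $N_s$ changes sign only once on $(0,1)$, equivalently that $\Phi_s$ has a unique interior critical point. Without this, the stated bound $\Phi_s(t_s)$ need not be the supremum, and indeed the theorem's characterization of $t_s$ as ``the unique root'' is itself part of what must be proved. The paper closes this by showing $\Phi_s$ is concave: writing $\Phi_s''(t)=F_3(t)/t^3$ with
\[
F_3(t)=\frac{s^2t^3+s^2t^2-st^3+st^2+2t(1-t)^s-2(1-t)^s-2st-2t+2}{(1-t)^{s+1}},
\]
one computes
\[
F_3'(t)=\frac{-s(1-s)\,t^2\,\bigl(st+s-2t+4\bigr)}{(1-t)^{s+2}}<0 \quad (0<t<1,\ 0<s<1),
\]
since $st+s-2t+4>2$; as $F_3(0^+)=0$, this forces $F_3<0$, hence $\Phi_s''<0$ on $(0,1)$. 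Concavity of $\Phi_s$ together with your endpoint signs for $\Phi_s'$ then yields the single crossing and the unique maximizer $t_s$. So your outline is salvageable by one more differentiation of the same explicit expression rather than by comparing $(1-t)^s$ with the rational curve you wrote down; as it stands, however, the proof is incomplete at its only nontrivial step.
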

\begin{proof}
Let $f\in\mathcal{S}^*_{hyp}$. By the definition of the class $\mathcal{S}^*_{hyp}$, we have
\beas\frac{zf'(z)}{f(z)}\prec \frac{1}{(1-z)^s}.\eeas
Thus, there exist a Schwarz function $\omega\in\mathcal{B}_0$ such that
\beas\frac{zf'(z)}{f(z)}=\frac{1}{(1-\omega(z))^s}.\eeas
Taking logarithmic derivative on both sides with respect to $z$, we obtain
\beas
P_f(z)=\frac{f''(z)}{f'(z)}=\frac{s\omega'(z)}{1-\omega(z)}+\frac{1}{z}\left(\frac{1}{(1-\omega(z))^s}-1\right).\eeas
Since $|\omega(z)|\leq |z|<1$ and the branch of the logarithm is determined by $\log(1)=0$, thus, we have
\beas \frac{1}{(1-\omega(z))^s}=\mathrm{exp}(-s\log(1-\omega(z)))=1+\sum_{n=1}^\infty \frac{s(s+1)\cdots(s+n-1)}{n!}\omega^n(z)\quad (z\in\mathbb{D}).\eeas
In view of the Schwarz-Pick lemma, we have
\beas (1-|z|^2)|P_f(z)|&=&(1-|z|^2)\left|\frac{s\omega'(z)}{1-\omega(z)}+\frac{1}{z}\left(\frac{1}{(1-\omega(z))^s}-1\right)\right|\\
&&\leq (1-|z|^2)\left(\frac{s|\omega'(z)|}{1-|\omega(z)|}+\frac{1}{|z|}\left(\frac{1}{(1-|\omega(z)|)^s}-1\right)\right)\\
&&\leq\frac{s\left(1-|\omega(z)|^2\right)}{1-|\omega(z)|}+\frac{(1-|z|^2)}{|z|}\left(\frac{1}{(1-|\omega(z)|)^s}-1\right).\eeas
For $0\leq t:=|\omega(z)|\leq |z|<1$, we have
\beas (1-|z|^2)|P_f(z)|\leq s(1+t)+\frac{(1-|z|^2)}{|z|}\left(\frac{1}{(1-t)^s}-1\right).\eeas
Therefore, we have
\bea \label{e7}\Vert P_f\Vert=\sup_{z\in\mathbb{D}}\; (1-|z|^2)|P_f(z)|\leq \sup_{0\leq t\leq |z|<1} F_1(|z|,t),\eea
where 
\beas F_1(r, t)=s(1+t)+\frac{(1-r^2)}{r}\left(\frac{1}{(1-t)^s}-1\right)\quad\text{for}\quad r=|z|.\eeas
Now the objective is to determine the supremum of $F_1(r, t)$ on $\Omega=\{(r, t): 0< t\leq r<1\}$.
Differentiating partially $F_1(r, t)$ with respect to $r$, we obtain
\beas\frac{\pa}{\pa r}F_1(r, t)=-\left(\frac{1}{r^2}+1\right)\left(\frac{1}{(1-t)^s}-1\right)<0.\eeas
Therefore, $F_1(r, t)$ is a monotonically decreasing function of $r\in[t,1)$ and it follows that $F_1(r, t) \leq F_1(t, t)=F_2(t)$,
where
\beas F_2(t)=s(1+t)+\frac{(1-t^2)}{t}\left(\frac{1}{(1-t)^s}-1\right).\eeas
It is evident that $F_2(t)=2(1+t)$ for $s=1$. Hence, we have $\Vert P_f\Vert\leq 4$. We consider the case, where $0<s<1$.
Differentiating $F_2(t)$ with respect to $t$, we obtain
\beas &&F_2'(t)=\frac{(1-t)^{-s} \left(s t^2 (1-t)^s+t^2 (1-t)^s+s t^2+(1-t)^s+s t-t^2-1\right)}{t^2}\\\text{and}
&&F_2''(t)=\frac{s^2 t^3+s^2 t^2-s t^3+s t^2+2 t (1-t)^s-2 (1-t)^s-2 s t-2 t+2}{(1-t)^{s+1} t^3}.\eeas
Let 
\beas
F_3(t)=\frac{s^2 t^3+s^2 t^2-s t^3+s t^2+2 t (1-t)^s-2 (1-t)^s-2 s t-2 t+2}{(1-t)^{s+1}}.\eeas
It is evident that $F_2''(t)=F_3(t)/t^3$ and 
\beas 
F_3'(t)=\frac{-s(1-s) t^2(s t+s-2 t+4)}{(1-t)^{s+2}}<0\quad\text{for}\quad  0<t<1,~0<s<1.\eeas
Therefore, $F_3(t)$ is a monotonically decreasing function of $t\in(0,1)$ and it follows that $F_3(t)\leq \lim_{t\to 0^+}F_3(t)=0$, {\it i.e.,} $F_2''(t)\leq 0$ for $0<t<1$.
Thus, $F_2'(t)$ is a monotonically decreasing function in $t$ with $\lim_{t\to 0^+}F_2'(t)=s (s+3)/2$ and $\lim_{t\to 1^-}F_2'(t)=-\infty$. 
Therefore, the equation $F_2'(t)=0$ has the unique root $t_s$ in $(0,1)$, as illustrated in Figure \ref{Fig8}. Thus, $F_2(t)$ attains its maximum value at $t=t_s$. From (\ref{e7}), we have
\beas\Vert P_f\Vert \leq F_2(t_s)=\frac{s t_s(1+t_s)+(1+t_s)(1-t_s)^{1-s}-(1-t_s^2)}{t_s},\eeas
where $t_s\in(0,1)$ is the unique positive root of the equation 
\bea\label{e8} F_s(t):=\frac{(1-t)^{-s} \left(s t^2 (1-t)^s+t^2 (1-t)^s+s t^2+(1-t)^s+s t-t^2-1\right)}{t^2}=0.\eea
\indent To show that the estimate is sharp, we consider the function $f_1$ given by
\beas f_1(z)=z\;\mathrm{exp}\left(\int_0^z \frac{(1-t)^{-s}-1}{t}dt\right).\eeas
The pre-Schwarzian norm of $f_1$ is given by
\beas \Vert P_{f_1}\Vert =\sup_{z\in\mathbb{D}}\;(1-|z|^2)|P_{f_1}(z)|=\sup_{z\in\mathbb{D}}\;(1-|z|^2)\left|\frac{zs(1-z)^{-1}+(1-z)^{-s}-1}{z}\right|.\eeas
On the positive real axis, we note that
\beas&&\sup_{0\leq r<1}(1-r^2)\frac{rs(1-r)^{-1}+(1-r)^{-s}-1}{r}\\
&&=\left\{\begin{array}{lll}
\dfrac{s r_s(1+r_s)+(1+r_s)(1-r_s)^{1-s}-(1-r_s^2)}{r_s}&\text{for}~s\in(0,1)\\[2mm]
4&\text{for}~s=1,\end{array}\right.\eeas
where $r_s\in(0,1)$ is the unique root of the equation \eqref{e8}. Therefore,
\beas\Vert P_{f_1}\Vert =\left\{\begin{array}{lll}
\dfrac{s r_s(1+r_s)+(1+r_s)(1-r_s)^{1-s}-(1-r_s^2)}{r_s}&\text{for}~s\in(0,1)\\[2mm]
4&\text{for}~s=1.\end{array}\right.\eeas
This completes the proof.
\end{proof}
\noindent In Table \ref{tab0} and Figure \ref{Fig8}, we obtain the values of $t_{s}$ and $\Vert P_f\Vert$ for certain values of $s\in(0,1)$. We observe that, whenever $s\to1^-$, then $t_s\to1^{-}$ and $\Vert P_f\Vert\to4^-$.
\begin{table}[H]
\centering
\begin{tabular}{*{8}{|c}|}
\hline
$s$&1/2&1/3&2/3&3/4&4/5&9/10&99/100\\
\hline
$t_s$&0.765186&0.721166&0.819069&0.851565 &0.873603&0.926039&0.990582\\
\hline
$\Vert P_f\Vert$&1.45876&0.926878&2.06701&2.41553&2.64591&3.18262&3.86967\\
\hline
\end{tabular}
\caption{$t_s$ is the unique positive root of the equation (\ref{e8}) in $(0,1)$}
\label{tab0}\end{table}
\begin{figure}[H]
\centering
\includegraphics[scale=0.9]{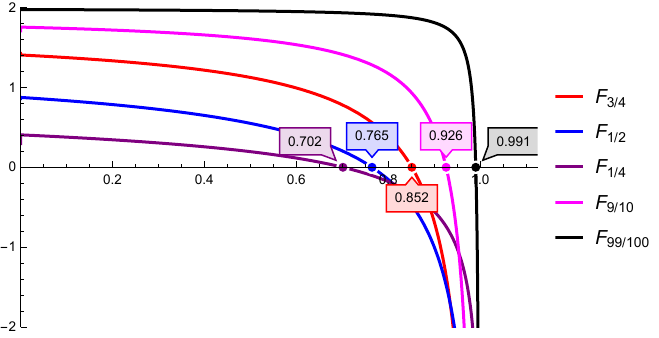}
\caption{Graph of $F_s(t)$ for different values of $s$ in $(0,1)$}
\label{Fig8}
\end{figure}
In the following result, we obtain the estimate of the pre-Schwarzian norm for functions in the class $\mathcal{S}^*_{L}$.
\begin{theo}
Let $f\in\mathcal{S}^*_{L}$. Then the pre-Schwarzian norm satisfies the following inequality
\beas\Vert P_f\Vert \leq \frac{2s (1-t_s^2)}{1-st_s}+\frac{(1-t_s^2)\left((1+st_s)^2-1\right)}{t_s},\eeas
where $t_s\in(0,1)$ is the unique positive root of the equation 
\beas\frac{-3 s^4 t^4+2 s^3 t^3+\left(s^4+7 s^2\right) t^2-\left(2 s^3+8 s\right) t+3 s^2}{(1-s t)^2}=0.\eeas
\end{theo}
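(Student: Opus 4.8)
The plan is to run the same scheme as in the proof of Theorem \ref{Th1}. Writing $f\in\mathcal{S}^*_L$ through its Schwarz function, there is $\omega\in\mathcal{B}_0$ with $zf'(z)/f(z)=(1+s\omega(z))^2$; taking the logarithmic derivative of both sides gives
$$P_f(z)=\frac{2s\,\omega'(z)}{1+s\omega(z)}+\frac{(1+s\omega(z))^2-1}{z}.$$
First I would apply the triangle inequality together with the elementary bounds $|1+s\omega(z)|\ge 1-s|\omega(z)|>0$ (valid since $s\le 1/\sqrt2<1$) and $|(1+s\omega(z))^2-1|=|2s\omega(z)+s^2\omega(z)^2|\le 2s|\omega(z)|+s^2|\omega(z)|^2$, and then invoke the Schwarz--Pick estimate $(1-|z|^2)|\omega'(z)|\le 1-|\omega(z)|^2$. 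Writing $t:=|\omega(z)|$ and $r:=|z|$, so that $0\le t\le r<1$, this yields
$$(1-|z|^2)|P_f(z)|\le \frac{2s(1-t^2)}{1-st}+\frac{(1-r^2)\big((1+st)^2-1\big)}{r}=:G_1(r,t).$$

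Next I would reduce to one variable just as in Theorem \ref{Th1}: since the first term is independent of $r$ and $\pa G_1/\pa r=\big((1+st)^2-1\big)\big(-r^{-2}-1\big)\le 0$, the function $G_1(r,t)$ is decreasing in $r$ on $[t,1)$, hence $G_1(r,t)\le G_1(t,t)=:G_2(t)$ with
$$G_2(t)=\frac{2s(1-t^2)}{1-st}+\frac{(1-t^2)\big((1+st)^2-1\big)}{t},$$
so that $\Vert P_f\Vert\le \sup_{0<t<1}G_2(t)$. It then remains to locate the maximum of $G_2$ on $(0,1)$. A direct computation gives
$$G_2'(t)=\frac{2s\big(s-2t+st^2\big)}{(1-st)^2}+\big(s^2-4st-3s^2t^2\big),$$
and clearing the denominator $(1-st)^2$ shows that $G_2'(t)=0$ is precisely the quartic equation displayed in the statement.

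The hard part — exactly as in Theorem \ref{Th1} — is to show this critical equation has a \emph{unique} root in $(0,1)$ and that it is a maximum. Rather than wrestling with the (rather heavy) second derivative, the cleanest route is to observe that $G_2'$ is a sum of two strictly decreasing functions on $(0,1)$: the polynomial part $s^2-4st-3s^2t^2$ has derivative $-4s-6s^2t<0$, while a short simplification gives
$$\frac{d}{dt}\left(\frac{s-2t+st^2}{(1-st)^2}\right)=\frac{2(s^2-1)}{(1-st)^3}<0\qquad(0<t<1),$$
using $s<1$. Hence $G_2'$ is strictly decreasing on $(0,1)$; since $G_2'(0^+)=3s^2>0$ and $G_2'(1^-)=-\tfrac{4s}{1-s}-2s^2-4s<0$, it has a unique zero $t_s\in(0,1)$, at which $G_2$ attains its maximum. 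Substituting $t=t_s$ into $G_2$ gives exactly the claimed bound $\Vert P_f\Vert\le \tfrac{2s(1-t_s^2)}{1-st_s}+\tfrac{(1-t_s^2)((1+st_s)^2-1)}{t_s}$. Finally, a remark on why the statement asserts only an inequality (not sharpness, unlike Theorem \ref{Th1}): the two elementary estimates used on $|1+s\omega|$ and $|(1+s\omega)^2-1|$ cannot be made equalities simultaneously, since the second forces $\omega$ to be positive real while the first is then strict. Indeed, the natural candidate extremal $f_{s,2}$ has $(1-r^2)|P_{f_{s,2}}(r)|=\tfrac{2s(1-r^2)}{1+sr}+\tfrac{(1-r^2)((1+sr)^2-1)}{r}<G_2(r)$, so one should not expect this estimate to be best possible.
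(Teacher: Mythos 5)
Your proposal is correct and follows essentially the same route as the paper: the same Schwarz--Pick estimate, the same reduction of the two-variable bound to the diagonal $r=t$, and the same quartic critical-point equation obtained by clearing $(1-st)^2$. The only (cosmetic) difference is that where the paper shows $F_5'$ is decreasing by computing $F_5''$ and analyzing its quartic numerator, you split $G_2'$ into two visibly decreasing summands via the identity $\tfrac{d}{dt}\bigl((s-2t+st^2)/(1-st)^2\bigr)=2(s^2-1)/(1-st)^3$ (which I verified), and your closing remark correctly accounts for why only an inequality, not sharpness, is asserted here.
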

\begin{proof}
Let $f\in\mathcal{S}^*_{L}$. By the definition of the class $\mathcal{S}^*_{L}$, we have
\beas\frac{zf'(z)}{f(z)}\prec (1+sz)^2.\eeas
Thus, there exists a Schwarz function $\omega(z)\in\mathcal{B}_0$ such that
\beas\frac{zf'(z)}{f(z)}=(1+s\omega(z))^2.\eeas
Taking logarithmic derivative on both sides with respect to $z$, we obtain
\beas P_f(z)=\frac{f''(z)}{f'(z)}=\frac{2s\omega'(z)}{1+s \omega(z)}+\frac{1}{z}\left((1+s\omega(z))^2-1\right),\eeas
In view of the Schwarz-Pick lemma, we have
\beas
(1-|z|^2)|P_f(z)|&\leq& (1-|z|^2)\left(\frac{2s|\omega'(z)|}{|1+s\omega(z)|}+\frac{|(1+s\omega(z))^2-1|}{|z|}\right)\\
&\leq&(1-|z|^2)\left(\frac{2s|\omega'(z)|}{1-s|\omega(z)|}+\frac{(1+s|\omega(z)|)^2-1}{|z|}\right)\\
&\leq&\frac{2s\left(1-|\omega(z)|^2\right)}{1-s|\omega(z)|}+\frac{(1-|z|^2)\left((1+s|\omega(z)|)^2-1\right)}{|z|}.\eeas
For $0\leq t:=|\omega(z)|\leq |z|<1$, we obtain
\beas(1-|z|^2)|P_f(z)|\leq \frac{2s (1-t^2)}{1-st}+\frac{(1-|z|^2)\left((1+st)^2-1\right)}{|z|}.\eeas
Therefore, we have
\bea\label{e9}\Vert P_f\Vert=\sup_{z\in\mathbb{D}}\;(1-|z|^2)|P_f(z)|\leq \sup_{0\leq t\leq|z|<1}F_4(|z|, t),\eea
where 
\beas F_4(r, t)=\frac{2s (1-t^2)}{1-st}+\frac{(1-r^2)\left((1+st)^2-1\right)}{r}\quad \text{for}\quad |z|=r.\eeas
Now our objective is to determine the supremum of $F_4(r, t)$ on $\Omega=\{(r, t): 0< t\leq r<1\}$.
Differentiating partially $F_4(r, t)$ with respect to $r$, we obtain
\beas\frac{\pa}{\pa r}F_4(r, t)=-\left(\frac{1}{r^2}+1\right)\left((1+st)^2-1\right)<0.\eeas
Therefore, $F_4(r, t)$ is a monotonically decreasing function of $r\in[t,1)$ and it follows that $F_4(r, t) \leq F_4(t, t)=F_5(t)$,
where
\bea\label{e10} F_5(t)=\frac{2s (1-t^2)}{1-st}+\frac{(1-t^2)\left((1+st)^2-1\right)}{t}.\eea
Differentiate $F_5(t)$ twice with respect to $t$, we obtain
\beas F_5'(t)&=&\frac{-3 s^4 t^4+2 s^3 t^3+\left(s^4+7 s^2\right) t^2-\left(2 s^3+8 s\right) t+3 s^2}{(1-s t)^2},\\
F_5''(t)&=&\frac{2\left(3 s^5 t^4-7 s^4 t^3+3 s^3 t^2+3 s^2 t+2 s^3-4 s\right)}{(1-st)^3}.\eeas
Let 
\beas F_6(t)=3 s^5 t^4-7 s^4 t^3+3 s^3 t^2+3 s^2 t+2 s^3-4 s.\eeas
Differentiating $F_6(t)$ with respect to $t$, we obtain
\beas F_6'(t)=3s^2 \left(4 s^3 t^3-7 s^2 t^2+2 s t+1\right)=3 s^2 (1-s t)^2 (1+4 s t)>0.\eeas
Therefore, $ F_6(t)$ is a monotonically increasing function of $t\in[0,1)$ and it follows that
\beas F_6(t)\leq F_6(1)=3 s^5-7 s^4 +3 s^3+3 s^2 +2 s^3-4 s=-s(1-s)\left(3 s^3-4 s^2+s+4\right)<0.\eeas
Therefore, $F_5''(t)<0$ and hence, we have $F_5'(t)$ is a monotonically decreasing function of $t$ with $F_5'(0)=3s^2$ and $\lim_{t\to1^-}F_5'(t)=(-2 s^4+10 s^2-8 s)/(1-s)^2=2 s\left(s^2+s-4\right)/ (1-s)<0$.
This leads us to conclude that the equation $F_5'(t)=0$ has the unique root $t_s$ in $(0,1)$. This shows that $F_5(t)$ attains its maximum at $t_s$. From \eqref{e9} and \eqref{e10}, we have
\beas \Vert P_f\Vert \leq F_5(t_s)=\frac{2s (1-t_s^2)}{1-st_s}+\frac{(1-t_s^2)\left((1+st_s)^2-1\right)}{t_s},\eeas
where $t_s\in(0,1)$ is the unique positive root of the equation 
\bea\label{f4}G_s(t):=\frac{-3 s^4 t^4+2 s^3 t^3+\left(s^4+7 s^2\right) t^2-\left(2 s^3+8 s\right) t+3 s^2}{(1-s t)^2}=0.\eea
This completes the proof.
\end{proof}
\noindent In Table \ref{tab2} and Figure \ref{Fig9}, we obtain the values of $t_{s}$ and $\Vert P_f\Vert$ for certain values of $s\in(0,1/\sqrt{2}]$.
\begin{table}[H]
\centering
\begin{tabular}{*{6}{|c}|}
\hline
$s$				&1/2    				&11/20    		&2/3			&     3/5		&$1/\sqrt{2}$\\
\hline
$t_s$				&0.19266			&0.213611		&0.265836	&0.235311		&0.285555\\
\hline
$\Vert P_f\Vert$&2.07478			&2.30104	&2.85492	&2.53348	&3.05755\\
\hline
\end{tabular}
\caption{$t_s$ is the unique positive root of the equation (\ref{f4}) in $(0,1)$}
\label{tab2}\end{table}
\begin{figure}[H]
\centering
\includegraphics[scale=0.9]{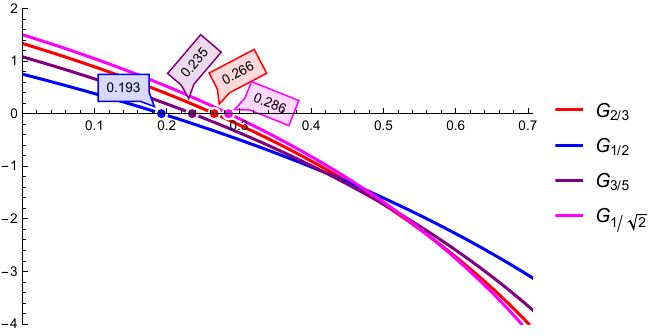}
\caption{Graph of $G_s(t)$ for different values of $s$ in $(0,1/\sqrt{2}]$}
\label{Fig9}
\end{figure}
In the following result, we establish the sharp estimate of the pre-Schwarzian norm for the functions in the class $\mathcal{C}_{hyp}$.
\begin{theo}
For any $g\in\mathcal{C}_{hyp}$, the pre-Schwarzian norm satisfies the following sharp inequality
\beas\Vert P_g\Vert  \leq  \left\{\begin{array}{lll}
\dfrac{(1+r_s)(1-r_s)^{1-s}-(1-r_s^2)}{r_s}&\text{for}~0<s<1\\[2mm]
2&\text{for}~s=1,\end{array}\right.\eeas
where $r_s\in(0,1)$ is the unique root of the equation 
\beas \frac{(1-r)^{-s} \left(r^2 (1-r)^s+r^2 s-r^2+(1-r)^s+r s-1\right)}{r^2}=0.\eeas
\end{theo}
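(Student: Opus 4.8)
The strategy mirrors the proof of Theorem~\ref{Th1}, now using the characterization of $\mathcal{C}_{hyp}$ recorded in \eqref{e5}: a function $g\in\mathcal{C}_{hyp}$ precisely when there is an analytic $p$ with $p(0)=1$ and $p\prec 1/(1-z)^s$ such that $1+zg''(z)/g'(z)=p(z)$. Hence $P_g(z)=g''(z)/g'(z)=(p(z)-1)/z$, and writing $p(z)=1/(1-\omega(z))^s$ for a Schwarz function $\omega\in\mathcal{B}_0$ gives $P_g(z)=\frac{1}{z}\big((1-\omega(z))^{-s}-1\big)$ --- exactly the second summand appearing in the expression for $P_f$ in the proof of Theorem~\ref{Th1}. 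Expanding $(1-w)^{-s}$ in its Taylor series and using $|\omega(z)|\le|z|$, I would obtain, with $t:=|\omega(z)|\le|z|=:r$, the bound $(1-|z|^2)|P_g(z)|\le G_1(r,t):=\frac{1-r^2}{r}\big((1-t)^{-s}-1\big)$.

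The next step is the two-variable optimization of $G_1$ on $\{0<t\le r<1\}$, identical in form to Theorem~\ref{Th1}. Since $\partial G_1/\partial r=-(1/r^2+1)\big((1-t)^{-s}-1\big)<0$ for $t>0$, we get $G_1(r,t)\le G_1(t,t)=G_2(t):=\frac{1-t^2}{t}\big((1-t)^{-s}-1\big)$. When $s=1$ one computes $G_2(t)=1+t$, so $\Vert P_g\Vert\le 2$. For $0<s<1$ the key point is the identity $G_2(t)=F_2(t)-s(1+t)$ with $F_2$ the auxiliary function from the proof of Theorem~\ref{Th1}; therefore $G_2''(t)=F_2''(t)\le 0$ on $(0,1)$, so $G_2'$ is strictly decreasing there. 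Since $\lim_{t\to0^+}G_2'(t)=s(s+1)/2>0$ and $\lim_{t\to1^-}G_2'(t)=-\infty$, the equation $G_2'(t)=0$ has a unique root $r_s\in(0,1)$, at which $G_2$ attains its maximum. Using $1-t^2=(1+t)(1-t)$ to simplify $G_2(r_s)$ produces the stated value $\big((1+r_s)(1-r_s)^{1-s}-(1-r_s^2)\big)/r_s$, while writing out $G_2'(t)=F_2'(t)-s=0$ gives the displayed root equation.

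For sharpness I would test the extremal function $f_{s,3}$ from \eqref{e5} (taking $p(z)=1/(1-z)^s$), for which $P_{f_{s,3}}(z)=\big((1-z)^{-s}-1\big)/z$. Applying the triangle inequality to its power series shows $(1-|z|^2)|P_{f_{s,3}}(z)|\le \frac{1-|z|^2}{|z|}\big((1-|z|)^{-s}-1\big)$ with equality on the positive real axis, so $\Vert P_{f_{s,3}}\Vert=\sup_{0\le r<1}\frac{1-r^2}{r}\big((1-r)^{-s}-1\big)$, which equals $G_2(r_s)$ for $0<s<1$ and $2$ for $s=1$ --- matching the upper bound. The only \emph{genuinely delicate} step is showing $G_2'$ is monotone (equivalently $G_2''\le 0$) on all of $(0,1)$; this is precisely the obstacle already overcome in Theorem~\ref{Th1} via the sign analysis of the auxiliary function $F_3$, and the identity $G_2=F_2-s(1+t)$ transfers it verbatim. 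Everything else --- the limits of $G_2'$, the algebraic simplification of $G_2(r_s)$, and the triangle-inequality sharpness argument --- is routine.
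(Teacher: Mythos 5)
Your proposal is correct and follows essentially the same route as the paper: reduce to the single-variable function $\frac{1-t^2}{t}\left((1-t)^{-s}-1\right)$, show its derivative is strictly decreasing from $s(s+1)/2$ to $-\infty$ so there is a unique critical point $r_s$, and verify sharpness with the same extremal function $f_{s,3}$ whose pre-Schwarzian has nonnegative Taylor coefficients. The only (harmless) difference is organizational: you import the concavity from Theorem~\ref{Th1} via the identity $G_2=F_2-s(1+t)$, whereas the paper recomputes it directly with a new auxiliary function.
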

\begin{proof}
Let $g\in\mathcal{C}_{hyp}$, then by the definition of the class $\mathcal{C}_{hyp}$, we have  
\beas 1+\frac{zg''(z)}{g'(z)}\prec \frac{1}{(1-z)^s}.\eeas
Thus, there exist a function $\omega(z)\in\mathcal{B}_0$ such that
\beas
1+\frac{zg''(z)}{g'(z)}=\frac{1}{(1-\omega(z))^s}.\eeas
By a simple calculation, we have
\beas(1-|z|^2)|P_g(z)|=(1-|z|^2)\left|\frac{g''(z)}{g'(z)}\right|&=&(1-|z|^2)\left|\frac{1}{z}\left(\frac{1}{(1-\omega(z))^s}-1\right)\right|.\eeas
Since $|\omega(z)|\leq |z|<1$ and the branch of the logarithm is determined by $\log(1)=0$, thus, we have
\beas \frac{1}{(1-\omega(z))^s}=\mathrm{exp}(-s\log(1-\omega(z)))=1+\sum_{n=1}^\infty \frac{s(s+1)\cdots(s+n-1)}{n!}\omega^n(z)\quad (z\in\mathbb{D}).\eeas
 Thus, we have
 \beas(1-|z|^2)|P_g(z)|\leq (1-|z|^2)\frac{1}{|z|}\left(\frac{1}{(1-|\omega(z)|)^s}-1\right)\leq (1-|z|^2)\frac{1}{|z|}\left(\frac{1}{(1-|z|)^s}-1\right).\eeas
Therefore, we have 
\bea\label{f1} \Vert P_g\Vert =\sup_{z\in\mathbb{D}}(1-|z|^2)|P_g(z)|\leq \sup_{0\leq |z|<1} F_6(|z|),\eea
where 
\beas F_6(r)=\frac{(1-r^2)}{r}\left(\frac{1}{(1-r)^s}-1\right)=\frac{(1+r)(1-r)^{1-s}-(1-r^2)}{r}\quad \text{for} \quad |z|=r.\eeas
It is easy to see that for $s=1$, we have $F_6(r)=1+r$. Hence, we have $\Vert P_g\Vert \leq 2$. Now, we consider the case, where $0<s<1$.
A simple calculation gives
\beas
&&F_6'(r)=\frac{(1-r)^{-s} \left(r^2 (1-r)^s+r^2 s-r^2+(1-r)^s+r s-1\right)}{r^2}\\\text{and}
&& F_6''(r)=\frac{r^3 s^2-r^3 s+r^2 s^2+r^2 s+2 r (1-r)^s-2 (1-r)^s-2 r s-2 r+2}{(1-r)^{s+1} r^3}.\eeas
Let 
\beas F_7(r)=(1-r)^{-s-1} \left(r^3 s^2-r^3 s+r^2 s^2+r^2 s+2 r (1-r)^s-2 (1-r)^s-2 r s-2 r+2\right).\eeas
It is evident that $ F_6''(r)=F_7(r)/r^3$ and 
\beas F_7'(r)=\frac{r^2 (s-1) s(r s-2 r+s+4)}{(1-r)^{s+2}}\leq 0\quad\text{for}\quad 0<r<1,~0<s<1.\eeas
Therefore, $F_7(r)$ is a monotonically decreasing function of $r\in(0,1)$ and it follows that $F_7(r)\leq \lim_{r\to0^+}F_7(r)=0$, {\it i.e.,} $F_6''(r)\leq 0$ for $0<r<1$.
Thus, $F_6'(r)$ is a monotonically decreasing function in $r$ with $\lim_{r\to 0^+}F_6'(r)=s(s+1)/2$ and $\lim_{r\to 1^-}F_6'(r)=-\infty$. 
Therefore, the equation $F_6'(r)=0$ has the unique root $r_s$ in $(0,1)$, as illustrated in Figure \ref{Fig3}. Thus, $F_6(r)$ attains its maximum value at $r=r_s$. 
From \eqref{f1}, we have
\beas \Vert P_g\Vert \leq \dfrac{(1+r_s)(1-r_s)^{1-s}-(1-r_s^2)}{r_s},\eeas
where $r_s\in(0,1)$ is the unique root of the equation 
\bea\label{f3} h_s(r):=\dfrac{(1-r)^{-s} \left(r^2 (1-r)^s+r^2 s-r^2+(1-r)^s+r s-1\right)}{r^2}=0.\eea
\indent To show that the estimate is sharp, we consider the function $f_3$ given by
\beas f_2(z)=\int_0^z \mathrm{exp}\left(\int_0^u \frac{(1-t)^{-s}-1}{t}dt\right)du.\eeas
The pre-Schwarzian norm of $f_2$ is given by
\beas \Vert P_{f_2}\Vert =\sup_{z\in\mathbb{D}}(1-|z|^2)|P_{f_2}(z)|=\sup_{z\in\mathbb{D}}(1-|z|^2)\left|\frac{(1-z)^{-s}-1}{z}\right|.\eeas
On the positive real axis, we note that
\beas\sup_{0\leq r<1}(1-r^2)\frac{(1-r)^{-s}-1}{r}=\left\{\begin{array}{lll}
\dfrac{(1+r_s)(1-r_s)^{1-s}-(1-r_s^2)}{r_s},&\text{when}~0<s<1\\[2mm]
2,&\text{when}~s=1,\end{array}\right.\eeas
where $r_s\in(0,1)$ is the unique root of the equation \eqref{f3}. Therefore,
\beas\Vert P_{f_2}\Vert =\left\{\begin{array}{lll}
\dfrac{(1+r_s)(1-r_s)^{1-s}-(1-r_s^2)}{r_s},&\text{when}~0<s<1\\[2mm]
2,&\text{when}~s=1.\end{array}\right.\eeas
This completes the proof.
\end{proof}
\noindent In Table \ref{tab1} and Figure \ref{Fig3}, we obtain the values of $r_{s}$ and $\Vert P_g\Vert$ for certain values of $s\in(0,1)$. We observe that, whenever $s\to1^-$, then $r_s\to1^{-}$ and $\Vert P_g\Vert\to2^-$.
\begin{table}[H]
\centering
\begin{tabular}{*{8}{|c}|}
\hline
$s$&1/2&1/3&1/4&2/3&3/4&4/5&9/10\\
\hline
$r_s$&0.54079&0.451833&0.412132&0.647789&0.71149&0.754417&0.855998\\
\hline
$\Vert P_g\Vert$&0.622369&0.390816&0.286101&0.900474&1.06896&1.18504&1.47402\\
\hline
\end{tabular}
\caption{$r_s$ is the unique positive root of the equation (\ref{f3}) in $(0,1)$}
\label{tab1}\end{table}
\begin{figure}[H]
\centering
\includegraphics[scale=0.8]{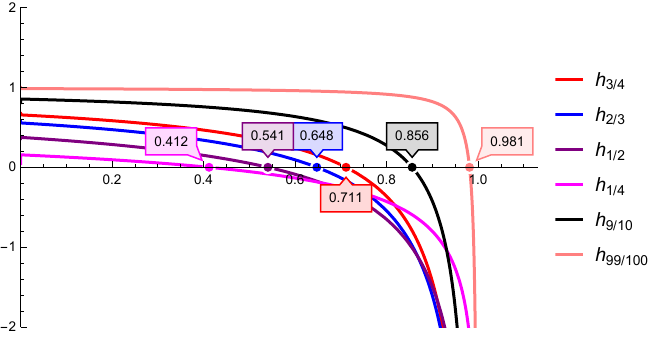}
\caption{Graph of $h_s(r)$ for different values of $s$ in $(0,1)$}
\label{Fig3}
\end{figure}
In the following result, we establish the sharp estimate of the pre-Schwarzian norm for the functions in the class $\mathcal{C}_{L}$.
\begin{theo}
For any $g\in\mathcal{C}_{L}$, the pre-Schwarzian norm satisfies the following sharp inequality
\beas\Vert P_g\Vert\leq \frac{2 \left(\sqrt{3 s^2+4}+4\right) \left(3 s^2+2 \sqrt{3 s^2+4}-4\right)}{27 s}.\eeas
\end{theo}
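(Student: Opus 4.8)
The plan is to follow the template established for $\mathcal{S}^*_L$, but the argument is in fact shorter because for a convex-type class the pre-Schwarzian derivative contains no term in $\omega'$. Starting from $g\in\mathcal{C}_L$, the subordination $1+zg''(z)/g'(z)\prec(1+sz)^2$ produces a Schwarz function $\omega\in\mathcal{B}_0$ with $1+zg''(z)/g'(z)=(1+s\omega(z))^2$, hence
\beas P_g(z)=\frac{g''(z)}{g'(z)}=\frac{(1+s\omega(z))^2-1}{z}.\eeas
Using $|\omega(z)|\le|z|$ together with the monotonicity of $t\mapsto(1+st)^2-1=2st+s^2t^2$ on $[0,1)$, I would bound
\beas (1-|z|^2)|P_g(z)|\le(1-|z|^2)\,\frac{(1+s|z|)^2-1}{|z|}=(1-|z|^2)(2s+s^2|z|),\eeas
so that $\Vert P_g\Vert\le\sup_{0\le r<1}F(r)$ where $F(r)=(1-r^2)(2s+s^2r)$.

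The second step is a one-variable maximization. Here $F$ is a cubic with $F'(r)=s^2-4sr-3s^2r^2$ and $F''(r)=-4s-6s^2r<0$ on $[0,1)$, so $F$ is strictly concave; since $F'(0)=s^2>0$ and $F'(r)\to-\infty$, the equation $F'(r)=0$ has a unique positive root, namely $r_s=(\sqrt{3s^2+4}-2)/(3s)$, and the elementary inequality $\sqrt{3s^2+4}<3s+2$ gives $r_s\in(0,1)$. Thus $F$ attains its maximum at $r_s$, and it remains only to evaluate $F(r_s)$: writing $a=\sqrt{3s^2+4}$ and using $a^2=3s^2+4$ one finds $2s+s^2r_s=s(a+4)/3$ and $1-r_s^2=2(3s^2+2a-4)/(9s^2)$, whose product is the asserted bound
\beas F(r_s)=\frac{2\left(\sqrt{3s^2+4}+4\right)\left(3s^2+2\sqrt{3s^2+4}-4\right)}{27s}.\eeas

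For sharpness I would take the extremal function $f_{s,4}(z)=\int_0^z\exp\left(\int_0^u((1+st)^2-1)/t\,dt\right)du\in\mathcal{C}_L$, for which a logarithmic differentiation gives $P_{f_{s,4}}(z)=((1+sz)^2-1)/z$; restricting to the positive real axis recovers exactly the function $F(r)$ above, so $\Vert P_{f_{s,4}}\Vert=F(r_s)$ and the estimate is best possible. I do not expect any genuine obstacle here: unlike the hyperbolic cases, the maximizer $r_s$ is available in closed form, so the only care needed is the bookkeeping in simplifying $F(r_s)$ and the routine check that $r_s$ lies in $(0,1)$.
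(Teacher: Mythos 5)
Your proposal is correct and follows essentially the same route as the paper: the same subordination-to-Schwarz-function reduction, the same one-variable maximization of $(1-r^2)\left((1+sr)^2-1\right)/r=(1-r^2)(2s+s^2r)$ via concavity, the same closed-form critical point $r_s=(\sqrt{3s^2+4}-2)/(3s)$, and the same extremal function $f_{s,4}$ for sharpness. The algebraic simplifications you carry out (cancelling the $r$ early and verifying $r_s\in(0,1)$) all check out.
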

\begin{proof}
Let $g\in\mathcal{C}_{L}$, then by the definition of the class $\mathcal{C}_{L}$, we have
\beas 1+\frac{zg''(z)}{g'(z)}\prec (1+s z)^2.\eeas
Thus, there exist a function $\omega(z)\in\mathcal{B}_0$ such that
\beas1+\frac{zg''(z)}{g'(z)}=(1+s~\omega(z))^2,\quad\text{\it i.e.,}\quad \frac{g''(z)}{g'(z)}=\frac{(1+s~\omega(z))^2-1}{z}.\eeas
As $|\omega(z)|\leq |z|<1$, we have
\beas
(1-|z|^2)|P_g|=(1-|z|^2)\left|\frac{g''(z)}{g'(z)}\right|&=&(1-|z|^2)\left|\frac{(1+s~\omega(z))^2-1}{z}\right|\\
&\leq&(1-|z|^2)\left(\frac{(1+s~|\omega(z)|)^2-1}{|z|}\right)\\
&\leq&\frac{(1-|z|^2)\left((1+s|z|)^2-1\right)}{|z|}.\eeas
Therefore, the pre-Schwarzian norm for the function $g\in\mathcal{C}_{L}$ is
\bea\label{f2} \Vert P_g\Vert=\sup_{z\in\mathbb{D}}(1-|z|^2)|P_g(z)|\leq \sup_{0\leq|z|<1} F_8(|z|),\eea
where 
\beas F_8(r)=\frac{(1-r^2)\left((1+sr)^2-1\right)}{r}\quad \mathrm{for}~|z|=r.\eeas 
Differentiate twice $F_8(r)$ with respect to $r$, we obtain
\beas
F_8'(r)=s^2(1-3 r^2)-4 r s\quad \text{and} \quad
\quad F_8''(r)=-2 \left(3 r s^2+2 s\right)<0.\eeas
Thus, $F_8'(r)$ is a monotonically decreasing function in $r$ with $F_8'(0)=s^2$ and $F_8'(1)=-2 s^2-4 s$. 
Thus, the equation $F_8'(r)=0$ has the unique root $r_0=(-2+\sqrt{3 s^2+4})/(3 s)$ in $(0,1)$. Therefore, $F_8(r)$ attains its maximum value at $r=r_0$. 
\noindent From \eqref{f2}, we have
\beas\Vert P_g\Vert \leq \frac{2 \left(\sqrt{3 s^2+4}+4\right) \left(3 s^2+2 \sqrt{3 s^2+4}-4\right)}{27 s}.\eeas
\indent To show that the estimate is sharp, let us consider the function $f_4$ defined by
\beas f_3(z)=\int_0^z\mathrm{exp}\left(2s t+\frac{s^2}{2}t^2\right)dt.\eeas
The pre-Schwarzian norm of $f_3$ is given by
\beas \Vert P_{f_3}\Vert =\sup_{z\in\mathbb{D}}(1-|z|^2)|P_{f_3}(z)|=\sup_{z\in\mathbb{D}}\left|\frac{(1-|z|^2)((1+sz)^2-1)}{z}\right|.\eeas
On the positive real axis, we have
\beas \sup_{0\leq r<1}\left(\frac{(1-r^2)((1+sr)^2-1)}{r}\right)=\frac{2 \left(\sqrt{3 s^2+4}+4\right) \left(3 s^2+2 \sqrt{3 s^2+4}-4\right)}{27 s}.\eeas
Thus, we have
\beas\Vert P_{f_3}\Vert =\frac{2 \left(\sqrt{3 s^2+4}+4\right) \left(3 s^2+2 \sqrt{3 s^2+4}-4\right)}{27 s}.\eeas
This completes the proof.
\end{proof}
\section*{Declarations}
\noindent{\bf Acknowledgment:} The work of the second author is supported by University Grants Commission (IN) fellowship (No. F. 44 - 1/2018 (SA - III)).\\[2mm]
{\bf Conflict of Interest:} The authors declare that there are no conflicts of interest regarding the publication of this paper.\\[2mm]
{\bf Availability of data and materials:} Not applicable\\[2mm]
{\bf Authors' contributions:} All authors contributed equally to the investigation of the problem, and all authors have read and approved the final manuscript.

\end{document}